\newtheorem{theorem}{Theorem}[section]
\newtheorem{lemma}[theorem]{Lemma}
\theoremstyle{definition}
\theoremstyle{definition}
\newtheorem{remark}[theorem]{Remark}
\theoremstyle{definition}
\newtheorem{definition}[theorem]{Definition}
\newtheorem{question}[theorem]{Question}
\theoremstyle{definition}
\theoremstyle{definition}
\newcommand{\precdot}{\prec\mathrel{\mkern-5mu}\mathrel{\cdot}}
\def\conv{\operatorname{conv}}
\def\diam{\operatorname{diam}}
\def\des{\operatorname{des}}
\def\stat{\operatorname{stat}}
\def\outdegree{\operatorname{outdegree}}
\title{A Realization of Poset Associahedra}
\author{Andrew Sack}
\address{Department of Mathematics, University of California, Los Angeles, CA 90095, USA}
\email{{\href{mailto:andrewsack@math.ucla.edu}{andrewsack@math.ucla.edu}}}
\thanks{This material is based upon work supported by the National Science Foundation
Graduate Research Fellowship Program under Grant No. DGE-2034835 and National Science Foundation Grants No. DMS-1954121 and DMS-2046915. Any
opinions, findings, and conclusions or recommendations expressed in this material
are those of the author(s) and do not necessarily reflect the views of the National
Science Foundation.}
\date{\today}
\begin{document}

\keywords{Poset, associahedron, cyclohedron, realization, configuration space, compactification}

\begin{abstract}
Given any connected poset $P$, we give a simple realization of Galashin's poset associahedron $\mathscr A(P)$ as a convex polytope in $\mathbb R^P.$  The realization is inspired by the description of $\mathscr A(P)$ as a compactification of the configuration space of order-preserving maps~$P \to \mathbb{R}.$  In addition, we give an analogous realization for Galashin's affine poset cyclohedra.
\end{abstract}

\maketitle

\section{Introduction}

Given a finite connected poset $P$, the poset associahedron $\mathscr A(P)$ is a simple, convex polytope of dimension $|P|-2$ introduced by Galashin~\cite{galashin2021poset}.  Poset associahedra arise as a natural generalization of Stasheff's associahedra~\cite{haiman1984constructing, petersen2015Eulerian, StasheffCyclohedron, tamari1954monoides}, and were originally discovered by considering compactifications of the configuration space of order-preserving maps~${P\to\mathbb R.}$   These compactifications are generalizations of the Axelrod\nobreakdash--Singer compactification of the configuration space of points on a line~\cite{axelrod1994chern, lambrechts2010associahedron, sinha2004manifold}.  Galashin constructed poset associahedra by performing stellar subdivisions on the polar dual of Stanley's \emph{order polytope}~\cite{stanley1986two}, but did not provide an explicit realization.  Various poset associahedra and cyclohedra have already been studied including \emph{permutohedra}, \emph{associahedra}, \emph{operahedra}~\cite{laplante2022diagonal}, \emph{type B permutohedra}~\cite{fomin2005root}, and \emph{cyclohedra}~\cite{bott1994self}. 

Poset associahedra bear resemblance to graph associahedra, where the face lattice of each is described by a \emph{tubing criterion.}  However, neither class is a subset of the other. When Carr and Devadoss introduced graph associahedra in~\cite{carr2006coxeter}, they distinguish between \emph{bracketings} and \emph{tubings} of a path, where the idea of bracketings does not naturally extend to any simple graph.  In the case of poset associahedra, the idea of bracketings \emph{does} extend to every connected poset.

Galashin~\cite{galashin2021poset} also introduces \emph{affine posets,} and analagous \emph{affine order polytopes} and \emph{affine poset cyclohedra}. In this paper, we provide a simple realization of poset associahedra and affine poset cyclohedra as an intersection of half spaces, inspired by the compactification description and by a similar realization of graph associahedra due to Devadoss~\cite{devadoss2009realization}.   In independent work~\cite{mantovani2023Poset}, Mantovani, Padrol, and Pilaud found a realization of poset associahedra as sections of graph associahedra.  The authors of~\cite{mantovani2023Poset} also generalize from posets to oriented building sets (which combine a building set with an oriented matroid).

\pagebreak
\section{Background}

\subsection{Poset Associahedra}

We start by defining the poset associahedron.

\begin{definition}
\label{def:tubings}
Let $(P, \preceq)$ be a finite poset.  We make the following definitions:
\begin{itemize}
    \item A subset $\tau \subseteq P$ is \emph{connected} if it is connected as an induced subgraph of the Hasse diagram of $P$.
    \item $\tau \subseteq P$ is \emph{convex} if whenever $a, c \in \tau$ and $b \in P$ such that $a \preceq b \preceq c$, then $b \in \tau$.
    \item A \emph{tube} of $P$ is a connected, convex subset $\tau \subseteq P$ such that $2 \le |\tau|$.  
    \item A tube $\tau$ is \emph{proper} if $|\tau| \le |P|-1.$
    \item Two tubes $\sigma, \tau \subseteq P$ are \emph{nested} if $\sigma \subseteq \tau$ or $\tau \subseteq \sigma.$ Tubes $\sigma$ and $\tau$ are \emph{disjoint} if $\tau \cap \sigma = \emptyset$.
    \item For disjoint tubes $\sigma, \tau$ we say $\sigma \prec \tau$ if there exists $a \in \sigma, b \in \tau$ such that $a \prec b.$
    \item A \emph{proper tubing} $T$ of $P$ is a set of proper tubes of $P$ such that any pair of tubes is nested or disjoint and the relation $\prec$ extends to a partial order on $T$.  That is, whenever $\tau_1, \dots, \tau_k \in T$ with $\tau_1 \prec \dots \prec \tau_k$ then $\tau_k \not\prec \tau_1$.  This is referred to as the \emph{acyclic tubing condition.}
    \item A proper tubing $T$ is $\emph{maximal}$ if it is maximal by inclusion on the set of all proper tubings.
    \end{itemize}

\begin{figure}
  \makebox[1.0\textwidth]{
\scalebox{0.94}{
\ \ \quad\begin{tabular}{cccccc|cccccc}
 \includegraphics{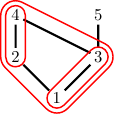}& \qquad&
  \includegraphics{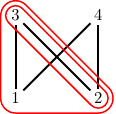}& \qquad&
  \includegraphics{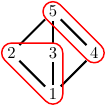}
& \qquad&\qquad & 
  \includegraphics{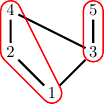}& \qquad&
  \includegraphics{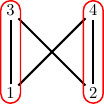}& \qquad&
  \includegraphics{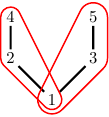}\\ &&&&&&&&&&& \\
& & Examples &&&&& & & Non-examples & &
\end{tabular}
}
}
  \caption{\label{fig:tubing_examples}Examples and non-examples of proper tubings.}
\end{figure}

Figure \ref{fig:tubing_examples} shows examples and non-examples of proper tubings.

\end{definition}

\begin{definition}
For a finite poset $P$, the \emph{poset associahedron} $\mathscr A(P)$ is a simple, convex polytope of dimension $|P|-2$ whose face lattice is isomorphic to the set of proper tubings ordered by reverse inclusion.  That is, if $F_T$ is the face corresponding to $T$, then $F_S \subset F_T$ if one can make $S$ from $T$ by adding tubes.  Vertices of $\mathscr A(P)$ correspond to maximal tubings of $P$.
\end{definition}

We realize poset associahedra as an intersection of half-spaces.  Let $P$ be a finite poset and let $n = |P|$.  We work in the ambient space $\mathbb R^P_{\Sigma = 0}$, the space of real\nobreakdash-valued functions on $P$ that sum to $0$.  For a subset $\tau \subseteq P$, define a linear function $\alpha_\tau$ on $\mathbb R^P_{\Sigma = 0}$ by $$\alpha_\tau(p) := \sum\limits_{\substack{i \precdot j \\ i, j \in \tau}} p_j - p_i.$$  
Here the sum is taken over all covering relations contained in $\tau$.  We define the half-space $h_\tau$ and the hyperplane $H_\tau$ by

$$\begin{aligned}
h_\tau &:= \{p \in \mathbb R^P_{\Sigma = 0} \mid \alpha_\tau(p) \ge n^{2|\tau|}\} &\text{ and }
\\H_\tau &:= \{p \in \mathbb R^P_{\Sigma = 0} \mid \alpha_\tau(p) = n^{2|\tau|}\}. & 
\end{aligned}$$

The following is our main result in the finite case:
\begin{theorem}\label{thm:main_thm_finite}
If $P$ is a finite, connected poset, the intersection of $H_P$ with $h_\tau $ for all proper tubes $\tau$ gives a realization of $\mathscr A(P)$.
\end{theorem}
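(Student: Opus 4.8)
\section*{Proof proposal}

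The plan is to show that $Q := H_P \cap \bigcap_{\tau} h_\tau$, the intersection over all proper tubes $\tau$, is a bounded $(n-2)$-dimensional polytope, and then to match its face lattice with proper tubings via the assignment $T \mapsto F_T := Q \cap \bigcap_{\tau \in T} H_\tau$. First I would record the basic geometry. For every covering relation $i \precdot j$ the pair $\{i,j\}$ is a proper tube (when $n \ge 3$), so on $Q$ one has $p_j - p_i = \alpha_{\{i,j\}}(p) \ge n^4 > 0$; hence every $p \in Q$ is a strictly order-preserving height function whose gaps along covers are bounded below. Together with $\alpha_P(p) = n^{2n}$ this forces the recession cone to be $0$: a recession direction $v$ would satisfy $v_j - v_i \ge 0$ on every cover while $\sum_{\text{covers}} (v_j - v_i) = 0$, so $v$ is constant along the (connected) Hasse diagram and thus $0$ in $\mathbb R^P_{\Sigma=0}$. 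So $Q$ is bounded. Since $\mathbb R^P_{\Sigma=0}$ has dimension $n-1$ and $H_P$ is one hyperplane, $\dim Q \le n-2$, and a generic strictly order-preserving height function rescaled onto $H_P$ gives a point with all inequalities strict, so $\dim Q = n-2$.

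Recall that a face of $Q$ is determined by its \emph{tight set}, the collection of proper tubes $\tau$ with $\alpha_\tau = n^{2|\tau|}$ on its relative interior, and that $F_S \subseteq F_T$ exactly when $T \subseteq S$. It therefore suffices to prove: (A) the tight set of every point of $Q$ is a proper tubing; and (B) every proper tubing $T$ is the tight set of some point of $Q$, so that $F_T$ is a genuine face of dimension $n-2-|T|$. The tool underlying both is that on $Q$ each value $\alpha_\tau(p)$ is a sum over the internal covers of $\tau$ of \emph{positive} quantities, each at least $n^4$ and with total at most $\alpha_P(p)=n^{2n}$; using this positivity together with $E(\sigma)\cap E(\tau)=E(\sigma\cap\tau)$ and $E(\sigma)\cup E(\tau)\subseteq E(\sigma\cup\tau)$ (edge sets of the induced subgraphs) gives the supermodular estimate $\alpha_{\sigma\cup\tau}(p)+\alpha_{\sigma\cap\tau}(p)\ge \alpha_\sigma(p)+\alpha_\tau(p)$.

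The heart of the argument, and the step I expect to be hardest, is (A): that no point of $Q$ can make two crossing tubes, or a $\prec$-cyclic family of disjoint tubes, simultaneously tight. Here the doubly-exponential weights $n^{2|\tau|}$ are essential. The point is that tightness $\alpha_\tau=n^{2|\tau|}$ is \emph{small} compared with the ambient spread $n^{2n}$, so a tight tube is confined to a narrow band of heights while the cover-gaps leaving the tube are comparatively enormous. A crossing pair $\sigma,\tau$ then over-constrains the bands of $\sigma$, $\tau$, and $\sigma\cap\tau$ and forces some internal cover-gap below its floor $n^4$ (already visible on a $4$-chain, where tightness of $\{1,2,3\}$ and $\{2,3,4\}$ together with $\alpha_P$ forces $p_3-p_2<0$), while a $\prec$-cycle forces a strictly increasing chain of height bands to close up on itself. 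Turning the ``narrow band'' heuristic into inequalities with the explicit constants — isolating, for each tight tube, the dominant cover-gaps and bounding the remaining slack — is the main technical obstacle, and is precisely what dictates the growth rate $n^{2|\tau|}$.

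For (B) I would construct $p_T$ explicitly: the tubes of a proper tubing form a laminar, acyclic family, so they can be processed from innermost to outermost while laying $P$ out on the line compatibly with $\preceq$ and with the disjoint-tube order $\prec$, and at each stage the exponential gaps leave enough freedom to set $\alpha_\tau(p_T)=n^{2|\tau|}$ exactly for $\tau\in T$ while keeping $\alpha_\sigma(p_T)>n^{2|\sigma|}$ for all other proper tubes $\sigma$, so the tight set of $p_T$ is exactly $T$. Linear independence of $\{\alpha_\tau:\tau\in T\}$ on $\mathbb R^P_{\Sigma=0}$ follows from laminarity: each $\tau\in T$ contains a covering relation lying in no smaller tube of $T$, yielding a triangular system; hence $\dim F_T=(n-2)-|T|$, and maximal tubings (which index vertices, so have $n-2$ tubes) give vertices, matching Galashin's description. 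Finally I would assemble the isomorphism: by (A) the tight set of every face is a proper tubing, by (B) every proper tubing occurs and the assignment $T\mapsto F_T$ is injective, and face containment corresponds to reverse containment of tubings. Therefore the face lattice of $Q$ is isomorphic to the set of proper tubings ordered by reverse inclusion, i.e.\ $Q$ realizes $\mathscr A(P)$. The degenerate case $n\le 2$ (no proper tubes, $Q$ a single point) is immediate.
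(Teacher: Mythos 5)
Your overall architecture is sound and is essentially the paper's: you identify faces of $Q$ with proper tubings via tight sets, and your claims (A) and (B) correspond precisely to the paper's Lemma~\ref{le:incompatible} (non-tubings are never simultaneously tight) and Lemmas~\ref{le:vertices}/\ref{le:interior} (each tubing is realized with all other inequalities strict). Your boundedness and dimension count are fine. But the proposal has a genuine gap: the two steps that carry all the content are not proven. For (A) you explicitly defer the argument (``turning the `narrow band' heuristic into inequalities with the explicit constants \dots is the main technical obstacle''), and for (B) you assert that ``the exponential gaps leave enough freedom'' without constructing the point or verifying strictness of the non-tight inequalities. A referee would count both as missing proofs, not proofs.

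Moreover, the tools you name for (A) point in the wrong direction. Your supermodular estimate $\alpha_{\sigma\cup\tau}+\alpha_{\sigma\cap\tau}\ge\alpha_\sigma+\alpha_\tau$ gives a \emph{lower} bound on $\alpha$ of the union, whereas the needed contradiction is that some constraint is forced \emph{below} its floor; and the constraint that gets violated in general is not an internal cover-gap (your $4$-chain mechanism, which works there only because the hull is all of $P$ and $H_P$ is an equality) but the floor of the hull tube $\rho=\conv(\tau_1\cup\dots\cup\tau_k)$. The missing idea is the two-sided comparison of the paper's Lemma~\ref{le:alpha_bound},
$$\diam_\tau(p)\ \le\ \alpha_\tau(p)\ \le\ \tfrac{n^2}{4}\,\diam_\tau(p),$$
used in both directions: tightness of each $\tau_i$ gives $\diam_{\tau_i}(p)\le n^{2|\tau_i|}$ (narrow bands); crossing or $\prec$-cyclicity then bounds $\diam_\rho(p)\le C\, n^{2(|\rho|-1)}$ for a small explicit $C$; and the upper inequality converts this into $\alpha_\rho(p)\le \tfrac{n^2}{4}\cdot C\,n^{2(|\rho|-1)}<n^{2|\rho|}$, contradicting $h_\rho$ (or $H_P$). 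Without an upper bound on $\alpha_\rho$ in terms of $\diam_\rho$ --- which your proposal never formulates --- the band heuristic cannot close. The same lemma is what makes (B) work in the paper's proof of Lemma~\ref{le:interior}: one splits the covers of $\sigma=\conv_T(\tau)$ into those inside the lower set $A$, those inside the upper set $B$, and those crossing, and the explicit bookkeeping with the constants $n^{2|\cdot|}$ yields $p_j-p_i>(n^2)^{|\sigma|-1}\ge (n^2)^{|\tau|}$ for $i\in A$, $j\in B$. You should either carry out these estimates or cite them; as written, the proposal is a correct plan whose decisive inequalities are absent.
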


\subsection{Affine Poset Cyclohedra}

Now we describe affine poset cyclohedra.  

\begin{definition}
An \emph{affine poset} of \emph{order} $n \ge 1$ is a poset $\tilde P = (\Z, \preceq)$ such that:
\begin{enumerate}
\item For all $i \in \Z, i \preceq i+n$;
\item $\tilde P$ is $n$-periodic: For all $i, j \in \Z, i \preceq j \Leftrightarrow i + n \preceq j + n$;
\item $\tilde P$ is \emph{strongly connected}: for all $i, j \in \Z$, there exists $k \in \Z$ such that $i \preceq j + kn$.
\end{enumerate}

The \emph{order} of $\tilde P$ is denoted $|\tilde P| := n$.

\end{definition} 

Following Galashin~\cite{galashin2021poset}, we give analagous versions of Definition \ref{def:tubings}.  We give them only where they differ from the finite case. 
\begin{definition}
Let $\tilde P = (\Z, \preceq)$ be an affine poset.
\begin{itemize}
    \item A \emph{tube} of $\tilde P$ is a connected, convex subset $\tau \subseteq P$ such that $2 \le |\tau|$ and either $\tau = \tilde P$ or $\tau$ has at most one element in each residue class modulo $n$. 
    \item A collection of tubes $T$ is \emph{$n$-periodic} is for all $\tau \in T, k \in \Z$, $\tau + kn \in T$.    
    \item A \emph{proper tubing} $T$ of $\tilde P$ is an $n$-periodic set of proper tubes of $\tilde P$ that satisfies the acyclic tubing condition and such that any pair of tubes is nested or disjoint.
\end{itemize}

Figure \ref{fig:aff_claw} gives an example of an affine poset of order $4$ and a maximal tubing of that poset.

\begin{figure}
  \makebox[1.0\textwidth]{
\scalebox{1.0}{
\ \ \quad\begin{tabular}{c|c}
 \includegraphics[scale = .8]{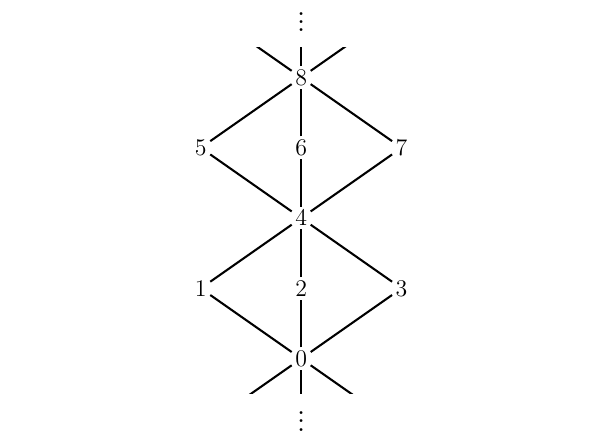}&
  \includegraphics[scale = .8]{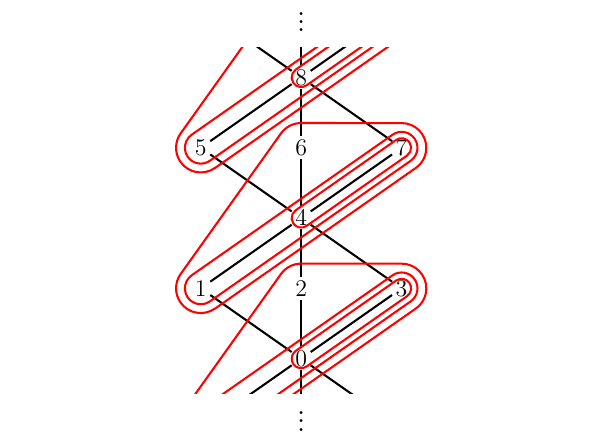}
\\

 $\tilde P$ &  A maximal tubing of $\tilde P$ 
\end{tabular}
}
}
  \caption{\label{fig:aff_claw} An affine poset of order $4$ and a maximal tubing}
\end{figure}

\end{definition}

\begin{definition}
For an affine poset $\tilde P$, the \emph{affine poset cyclohedron} $\mathscr C(\tilde P)$ is a simple, convex polytope of dimension $|\tilde P|-1$ whose face lattice is isomorphic to the set of proper tubings ordered by reverse inclusion.   Vertices of $\mathscr C(\tilde P)$ correspond to maximal tubings of $\tilde P$.
\end{definition}

We also realize affine poset cyclohedra as an intersection of half-spaces. Let $\tilde P$ be an affine poset and let $n = |\tilde P|$.  Fix some constant $c \in \R^+$.  We define the space of \emph{affine maps} $\R^{\tilde P}$ as the set of bi-infinite sequences $\mathbf{\tilde x} = (\tilde x_i)_{i \in \Z}$ such that $\tilde x_i = \tilde x_{i+n}+c$ for all $i \in \Z$.  Let $\mathcal C \subset \R^{\tilde P}$ be the subspace consisting of all constant maps.  We work in the ambient space $\R^{\tilde P}/\mathcal C$ where the constant $c$ in the definition of affine maps is given by $c = n^{2(n+1)}$.

For a finite subset $\tau \subseteq P$, define a linear function $\alpha_\tau$ on $\mathbb R^{\tilde P}/\mathcal C$ by $$\alpha_\tau(\mathbf{\tilde x}) := \sum\limits_{\substack{i \precdot j \\ i, j \in \tau}} \tilde x_j - \tilde x_i.$$  Again, the sum is taken over all covering relations contained in $\tau$.   We define the half-space $h_\tau$ and the hyperplane $H_\tau$ by

$$\begin{aligned} 
h_\tau &:= \{p \in \mathbb R^{\tilde P}/{\mathcal C} \mid \alpha_\tau(p) \ge n^{2|\tau|}\} &\text{ and }
\\H_\tau &:= \{p \in \mathbb R^{\tilde P}/{\mathcal C}  \mid \alpha_\tau(p) = n^{2|\tau|}\}.  & 
\end{aligned}$$

\begin{remark}\label{re:nperiodic}
Observe that for any tube $\tau$ and $k \in \Z$, $h_\tau = h_{\tau + kn}$. 
\end{remark}

The following is our main result in the affine case:
\begin{theorem}\label{thm:main_thm_affine}
If $\tilde P$ is an affine poset, the intersection of $h_\tau $ for all proper tubes $\tau$ gives a realization of $\mathscr C(\tilde P)$.
\end{theorem}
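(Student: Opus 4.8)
The plan is to follow the architecture of the finite case, Theorem~\ref{thm:main_thm_finite}, and reduce the affine statement to the same two assertions: that the proposed region $Q := \bigcap_{\tau} h_\tau$ (intersection over all proper tubes $\tau$) is a bounded polytope of dimension $n-1$, and that the assignment $T \mapsto F_T := Q \cap \bigcap_{\tau \in T} H_\tau$ is an inclusion-reversing bijection from proper tubings of $\tilde P$ onto the nonempty faces of $Q$. By Remark~\ref{re:nperiodic} the half-space $h_\tau$ depends only on the $n$-orbit of $\tau$, so although there are infinitely many proper tubes, $Q$ is cut out by finitely many distinct half-spaces and is therefore a polyhedron; correspondingly a proper tubing, being $n$-periodic, is determined by its finitely many orbits of tubes, and I would track faces and vertex--facet incidences at the level of these orbits.

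The first point to establish is that $Q$ is bounded of the correct dimension. Here the fixed period constant $c = n^{2(n+1)}$ plays the role that the equation $H_P$ plays in the finite case: it is strictly larger than every proper-tube threshold $n^{2|\tau|}$ (as $|\tau| \le n$ for a proper tube), and it pins down the aggregate spread across one fundamental domain. Using strong connectivity of $\tilde P$, every coordinate difference $\tilde x_j - \tilde x_i$ can be expressed through chains that telescope against this fixed period drop, so the lower bounds $\alpha_\tau \ge n^{2|\tau|}$ on tube spreads combine with the fixed total spread $c$ to bound all coordinates from both sides in $\mathbb R^{\tilde P}/\mathcal C$. Since $\dim \mathbb R^{\tilde P}/\mathcal C = n-1 = \dim \mathscr C(\tilde P)$ and the period drop is already built into the ambient space, no analogue of $H_P$ need be imposed, and a single point lying in the relative interior witnesses $\dim Q = n-1$.

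The combinatorial heart is the separation lemma, imported from the finite case and adapted across the wraparound: an $n$-periodic family $S$ of proper tubes has $Q \cap \bigcap_{\sigma \in S} H_\sigma \neq \emptyset$ precisely when $S$ is a proper tubing, and in that case this intersection is a face of codimension equal to the number of orbits of $S$. The engine is the rapid growth of the thresholds $n^{2|\tau|}$: if two tubes cross, or if a family violates the acyclic tubing condition, then the defining inequalities force a combination in which a larger tube's constraint dominates the sum of the smaller ones it would have to overcome, yielding a strict contradiction; conversely, for a genuine proper tubing one solves the orbit-reduced linear system $\alpha_\tau = n^{2|\tau|}$, $\tau \in T$, hierarchically from the largest tubes inward, and the exponential separation of scales guarantees that all remaining proper-tube inequalities hold strictly. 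Taking $T$ maximal, this produces a unique vertex $v_T$ lying on exactly the $n-1$ facet-hyperplanes indexed by the orbits of $T$; matching these vertex--facet incidences with the tubing combinatorics of $\mathscr C(\tilde P)$, together with simplicity, yields the face-lattice isomorphism.

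I expect the main obstacle to be the affine incarnation of the separation lemma, specifically the bookkeeping of covering relations across the period boundary. In the finite case the estimates are carried out inside a bounded poset; in the affine case one must ensure that the telescoping and the dominance estimates respect $n$-periodicity and correctly handle tubes that are long chains winding toward their $+n$ translates, for which the interaction between the tube thresholds and the fixed period constant $c$ must be controlled simultaneously. Verifying that the hierarchical solution of the linear system remains well defined and feasible in the quotient $\mathbb R^{\tilde P}/\mathcal C$, rather than in a finite coordinate space, is the crux; once the estimates are shown to survive the passage to the quotient, the remaining bijection and dimension counts follow exactly as in the proof of Theorem~\ref{thm:main_thm_finite}.
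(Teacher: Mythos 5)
Your overall architecture matches the paper's: the theorem is deduced from the same three assertions (maximal tubings cut out points, collections that are not tubings meet the polytope in nothing, and the point $v^T$ lies strictly inside every half-space $h_\tau$ with $\tau \notin T$), with the finite-case estimates recycled and periodicity used to reduce to finitely many constraints. The genuine gap is that you defer exactly the steps where the affine case differs from the finite one, and the one concrete mechanism you do propose fails precisely there. For the separation lemma you say that a violation forces ``a combination in which a larger tube's constraint dominates the sum of the smaller ones it would have to overcome.'' But when two crossing tubes $\tau_1, \tau_2$ together contain some $x$ \emph{and} its translate $x+n$, the convex hull of $\tau_1 \cup \tau_2$ has two elements in one residue class and hence is not a proper tube, so there is no larger half-space $h_\sigma$ available to dominate anything. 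The paper's resolution is different in kind: the contradiction comes from the ambient period constant itself, since $H_{\tau_1}$ and $H_{\tau_2}$ force $\diam_{\tau_1 \cup \tau_2}(p) \le \diam_{\tau_1}(p) + \diam_{\tau_2}(p) \le 2n^{2n}$, while every point of $\mathbb R^{\tilde P}/\mathcal C$ satisfies $|p_{x+n} - p_x| = n^{2(n+1)}$, which is strictly larger.

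The same issue recurs in the interiority lemma: if the proper tube $\tau \notin T$ is contained in no tube of $T$ (which happens exactly when $\tau$ straddles a maximal tube of $T$ and its translate), then $\conv_T(\tau)$ does not exist and the finite argument, which splits $\conv_T(\tau)$ into a lower part $A$ and an upper part $B$, has no starting point. The paper instead chooses $A \in T$ with $|A| = n$ such that $\tau$ meets both $A$ and $A+n$, and uses $p_{i+n} - p_i = n^{2(n+1)}$ together with $\diam_A(p), \diam_{A+n}(p) \le n^{2n}$ to force $\diam_\tau(p) > n^{2|\tau|}$. You correctly flag ``the interaction between the tube thresholds and the fixed period constant $c$'' as the crux, but flagging it is not supplying it: the missing idea is that $c = n^{2(n+1)}$, being strictly larger than every proper-tube threshold and every accumulated error of size a constant times $n^{2n}$, takes over the role that a containing tube's inequality plays in the finite case, both for producing contradictions and for producing strict inequalities. (Your vertex step is essentially right, though the paper's route is cleaner and worth noting: restrict to any $\sigma \in T$ with $|\sigma| = n$, apply the finite Lemma \ref{le:vertices} to $\tilde P|_\sigma$, and use $n$-periodicity of $T$ to conclude that the remaining hyperplanes impose no new conditions.)
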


\subsection{An interpretation of tubings}

When $P$ is a chain, $\mathscr A(P)$ recovers the classical associahedron.  There is a simple interpretation of proper tubings that explains all of the conditions above in terms of \emph{generalized words.}

We can understand the classical associahedron as follows: Let $P = (\{1, ..., n\}, \le)$  be a chain.  We can think of the chain as a word we want to multiply together with the rule that two elements can be multiplied if they are connected by an edge.  A maximal tubing of $P$ is a way of disambiguating the order in which one performs the multiplication.  If a pair of adjacent elements $x$ and $y$ have a pair of brackets around them, they contract along the edge connecting them and replace $x$ and $y$ by their product. 

\begin{figure}
  \makebox[1.0\textwidth]{
\scalebox{0.94}{
\ \ \quad

\begin{tabular}{ccccccccccc}
&  & \includegraphics{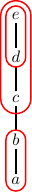} & 
\includegraphics{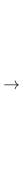} & 
\includegraphics{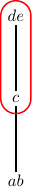} &
 \includegraphics{Graphics/GeneralizedWords/ChainTo.pdf} & 
 \includegraphics{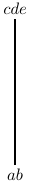} & 
 \includegraphics{Graphics/GeneralizedWords/ChainTo.pdf} &
  \includegraphics{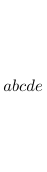} & 
        &                    \\
                   &       &                    &       &                    &       &                    &       &                    &       &                    \\
\includegraphics{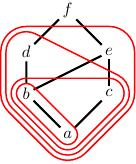} & 
\includegraphics{Graphics/GeneralizedWords/ChainTo.pdf} &
 \includegraphics{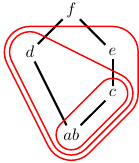} & 
\includegraphics{Graphics/GeneralizedWords/ChainTo.pdf} & 
\includegraphics{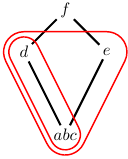} & 
\includegraphics{Graphics/GeneralizedWords/ChainTo.pdf} & 
\includegraphics{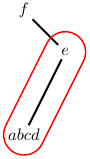} & 
\includegraphics{Graphics/GeneralizedWords/ChainTo.pdf} &
 \includegraphics{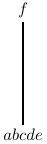} & 
\includegraphics{Graphics/GeneralizedWords/ChainTo.pdf} & 
\includegraphics{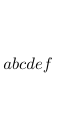}
\end{tabular}

}
}
  \caption{\label{fig:generalized_words} Multiplication of a word and of a generalized word}
\end{figure}

Similarly, we can understand the Hasse diagram of an arbitrary poset $P$ as a \emph{generalized word} we would like to multiply together.  Again, we are allowed to multiply two elements if they are connected by an edge, but when multiplying elements, we contract along the edge connecting them and then take the transitive reduction of the resulting directed graph.  That is, we identify the two elements and take the resulting quotient poset.  A maximal tubing is again a way of disambiguating the order of the multiplication. See Figure \ref{fig:generalized_words} for an illustration of this multiplication. This perspective is discussed in relation to operahedra in~\cite[Section 2.1]{laplante2022diagonal} when the Hasse diagram of $P$ is a rooted tree.

\section{Configuration spaces and compactifications}

We turn our attention to the relationship between poset associahedra and configuration spaces. For a poset $P$, the \emph{order cone} $$\mathscr L(P) := \{p \in \mathbb R^P_{\Sigma = 0} \mid p_i \le p_j \text{ for all } i \preceq j\}$$
is the set of order preserving maps $P \to \mathbb R$ whose values sum to $0$.

Fix a constant $c \in \mathbb R^+$.  The \emph{order polytope,} first defined by Stanley~\cite{stanley1986two} and extended by Galashin~\cite{galashin2021poset}, is the $(|P|-2)$-dimensional polytope $$\mathscr O(P) := \{p \in \mathscr L(P) \mid \alpha_P(p) = c\}.$$  

\begin{remark}
\label{re:stanley}
When $P$ is \emph{bounded}, that is, has a unique maximum $\hat 1$ and minimum $\hat 0$, this construction is projectively equivalent to Stanley's order polytope where we replace the conditions of the coordinates summing to $0$ and $\alpha_P(p) = c$ with the conditions $p_{\hat 0} = 0$ and $p_{\hat 1} = 1$, see~\cite[Remark~2.5]{galashin2021poset}.
\end{remark}

Galashin~\cite{galashin2021poset} obtains the poset associahedra by an alternative compactification of $\mathscr O^\circ(P)$, the interior of $\mathscr O(P)$.  We describe this compactification informally, as it serves as motivation for the realization in Theorem \ref{thm:main_thm_finite}.

A point is on the boundary of $\mathscr O(P)$ when any of the inequalities in the order cone achieve equality.  The faces of of $\mathscr O(P)$ are in bijection with proper tubings of $P$ such that all tubes are disjoint.  Let $T$ be such a tubing.  If $p$ is in the face corresponding to $T$ and $\tau \in T$ then $p_i = p_j$ for $i, j \in \tau$.

We can think of the point $p$ in the face corresponding to $T$ as being ``what happens in $\mathscr O(P)$'' when for each $\tau \in T$, the coordinates are infinitesimally close.  However, by taking all coordinates in $\tau$ to be equal, we lose information about their relative ordering.  In $\mathscr A(P)$, we still think of the coordinates in $\tau$ as being infinitesimally close, but we are still interested in their configuration.  Upon zooming in, this is parameterized by the order polytope of the subposet $(\tau, \preceq)$.  We iterate this process, allowing points in $\tau$ to be infinitesimally closer, and so on.  We illustrate this in Figure \ref{fig:compactification}.  This idea is a common explanation of the Axelrod\nobreakdash--Singer compactification of $\mathscr O^\circ(P)$ when $P$ is a chain, see~\cite{axelrod1994chern, lambrechts2010associahedron, sinha2004manifold}.

\begin{figure}
  \setlength{\tabcolsep}{0pt}
\makebox[1.0\textwidth]{
\scalebox{0.94}{
\begin{tabular}{cccc|cccc}

\includegraphics{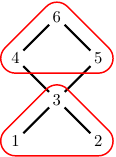}
&\qquad\qquad
&\includegraphics{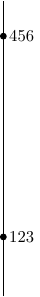}

&\qquad & \qquad&

\includegraphics{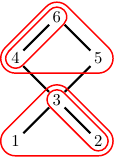}
&\qquad\qquad
&\includegraphics{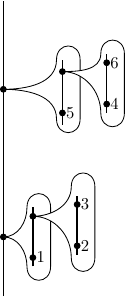}

\\

Tubing in $\mathscr O(P)$
& \qquad\qquad
& Point in $\mathscr O(P)$

& \qquad\qquad & \qquad\qquad &

Tubing in $\mathscr A(P)$
& \qquad\qquad
& Point in $\mathscr A(P)$

\end{tabular}
}
}
  \caption{\label{fig:compactification} A vertex in $\mathscr O(P)$ vs. $\mathscr A(P).$}
\end{figure}

The idea of the realization in Theorem \ref{thm:main_thm_finite} is to replace the notions of \emph{infinitesimally close} and \emph{infinitesimally closer} with being \emph{exponentially close} and \emph{exponentially closer.}  For $p \in \mathscr L(P)$, $\alpha_\tau$ acts a measure of how close the coordinates of $p|_\tau$ are. We can make this precise with the following definition and lemma.  

\begin{definition}\label{def:diameter}
For $S \subseteq P$ and $p \in \mathbb R^P$, define the \emph{diameter} of $p$ relative to $S$ by $$\diam_S(p) = \max\limits_{i, j \in S} |p_i - p_j|.$$  That is, $\diam_S(p)$ is the diameter of $\{p_i : i \in S\}$ as a subset of $\mathbb R$.
\end{definition}

\begin{lemma}
\label{le:alpha_bound}

Let $\tau \subseteq P$ be a tube and let $p \in \mathscr L(P)$.  Then $$\diam_\tau(p) \le \alpha_\tau(p) \le \frac{n^2}{4}\diam_\tau(p).$$
\end{lemma}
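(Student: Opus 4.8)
The plan is to prove each inequality separately, in both cases exploiting that $p \in \mathscr L(P)$ forces every summand of $\alpha_\tau$ to be nonnegative: for a covering relation $i \precdot j$ with $i, j \in \tau$ we have $i \prec j$, so $0 \le p_j - p_i \le \diam_\tau(p)$. Thus $\alpha_\tau(p)$ is a sum of nonnegative terms, each bounded above by $\diam_\tau(p)$, and the two bounds amount to controlling this sum from below and from above. Note that the edges of the induced subgraph of the Hasse diagram on $\tau$ are exactly the covering relations of $P$ contained in $\tau$, i.e. precisely the terms appearing in $\alpha_\tau$.

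For the lower bound $\diam_\tau(p) \le \alpha_\tau(p)$, I would choose $m, M \in \tau$ attaining the minimum and maximum of $p$ on $\tau$, so that $\diam_\tau(p) = p_M - p_m$. Since $\tau$ is a tube, this induced subgraph is connected, so there is a simple path $m = v_0, v_1, \dots, v_\ell = M$ in which each $\{v_{t-1}, v_t\}$ is a covering relation of $P$ contained in $\tau$. Telescoping, $p_M - p_m = \sum_{t=1}^{\ell} (p_{v_t} - p_{v_{t-1}})$; each term equals $\pm(p_j - p_i)$ for the corresponding cover $i \precdot j$, hence is at most $p_j - p_i \ge 0$. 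Because the path is simple, its edges are distinct covers contained in $\tau$, so this sum is dominated by the sum over \emph{all} covers contained in $\tau$, which is $\alpha_\tau(p)$. This yields $\diam_\tau(p) = p_M - p_m \le \alpha_\tau(p)$.

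For the upper bound, I would bound the number of covering relations contained in $\tau$. Writing $k = |\tau| \le n$, the key observation is that the graph $G$ on vertex set $\tau$ whose edges are the covers of $P$ contained in $\tau$ is triangle-free: if $a, b, c \in \tau$ were pairwise joined by covers, then they would be pairwise comparable and hence form a chain, say $a \prec b \prec c$, contradicting that the extreme pair $a \prec c$ cannot be a covering relation since $b$ lies strictly between them. By Mantel's theorem a triangle-free graph on $k$ vertices has at most $\lfloor k^2/4 \rfloor$ edges, so $\alpha_\tau(p)$ is a sum of at most $\lfloor k^2/4\rfloor \le n^2/4$ terms, each at most $\diam_\tau(p)$; therefore $\alpha_\tau(p) \le \frac{n^2}{4}\diam_\tau(p)$.

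The routine part is the telescoping in the lower bound, where the only care needed is the sign bookkeeping for edges traversed downward along the path. The main point to get right is the upper bound: the nontrivial input is recognizing that the covering graph is triangle-free so that one may invoke Mantel's theorem, rather than the crude bound $\binom{k}{2}$, to land exactly at the constant $n^2/4$. Notably, convexity of $\tau$ plays no role here—only connectivity (for the lower bound) and the cardinality bound $|\tau| \le n$ (for the upper bound) are used.
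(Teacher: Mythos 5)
Your proof is correct and follows essentially the same route as the paper: the lower bound via connectedness of $\tau$ and the triangle inequality (which you spell out as a telescoping sum along a path from the minimum to the maximum), and the upper bound by bounding each summand by $\diam_\tau(p)$ and counting covering relations via Mantel's theorem and triangle-freeness of the Hasse diagram. The only difference is that you supply details the paper leaves implicit, namely the explicit path argument and the proof that the covering graph is triangle-free.
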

\begin{proof}
By the triangle inequality and as $\tau$ is connected, $\diam_\tau(p) \le \alpha_\tau(p)$.  For the other inequality, $$\begin{aligned}
\alpha_\tau(p) &= \sum\limits_{\substack{i \precdot j \\i, j \in \tau}} p_j - p_i  
\\&\le \sum\limits_{\substack{i \precdot j \\i, j \in \tau}}  \diam_\tau(p)
\\&\le \frac{1}{4}n^2 \diam_\tau(p)
\end{aligned}$$

The inequality in the last line comes from the fact that there are at most $\frac{n^2}{4}$ covering relations in $P$, which follows from Mantel's Theorem and the fact that Hasse diagrams are triangle\nobreakdash-free.

\end{proof}

In particular, for $p \in \mathscr L(P)$, if $p \in H_\tau$, then $\{p_i \mid i \in \tau\}$ is clustered tightly together compared to any tube containing $\tau$.  If $p \in h_\tau$, then $\{p_i \mid i \in \tau\}$ is spread far apart compared to any tube contained in $\tau$.

\section{Realizing poset associahedra}

We are now prepared to prove Theorem \ref{thm:main_thm_finite}.  Define $$\mathscr A(P) := \bigcap\limits_{\sigma \subset P} h_\sigma \cap H_P$$ where the intersection is over all tubes of $P$.  Note that $\mathscr A(P) \subseteq \mathscr L(P)$ as if $i \precdot j$ is a covering relation, then for $p \in h_{\{i, j\}}$, $p_i \le p_j$.

Theorem \ref{thm:main_thm_finite} follows as a result of three lemmas:

\begin{lemma}\label{le:vertices}
If $T$ is a maximal tubing, then $$v^T := \bigcap\limits_{\tau \in T \cup \{P\}} H_\tau $$ is a point.  

\end{lemma}

\begin{lemma}\label{le:incompatible}
If $T$ is a collection of tubes that do not form a proper tubing, then $$\bigcap\limits_{\tau \in T} H_\tau \cap \mathscr A(P) = \emptyset.$$

\end{lemma}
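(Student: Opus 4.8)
The goal is to show that for any collection of tubes $T$ failing to be a proper tubing, the intersection $\bigcap_{\tau\in T}H_\tau\cap\mathscr A(P)$ is empty. A collection of tubes fails to be a proper tubing for one of exactly three reasons: some pair of tubes is neither nested nor disjoint (they \emph{cross}); the acyclic tubing condition fails; or some $\tau\in T$ is not proper. The plan is to handle each failure mode separately and derive a contradiction with the defining inequalities of $\mathscr A(P)$, using Lemma~\ref{le:alpha_bound} as the main quantitative tool throughout.

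The key estimate I would set up first is the following: if $p\in\mathscr A(P)$ and $\sigma\subsetneq\tau$ are tubes with $p\in H_\tau$, then the coordinates of $p|_\sigma$ are \emph{much} tighter than those of $p|_\tau$. Concretely, from $p\in H_\tau$ and Lemma~\ref{le:alpha_bound} we get $\diam_\tau(p)\le\alpha_\tau(p)=n^{2|\tau|}$, while from $p\in h_\sigma$ we get $\alpha_\sigma(p)\ge n^{2|\sigma|}$, hence $\diam_\sigma(p)\ge\alpha_\sigma(p)\cdot\tfrac{4}{n^2}\ge \tfrac{4}{n^2}n^{2|\sigma|}$. Comparing $n^{2|\tau|}$ against the diameter of a proper subtube forces a clean separation of scales because $|\tau|\ge|\sigma|+1$ means $n^{2|\tau|}\ge n^2\cdot n^{2|\sigma|}$, and the factors of $n^2/4$ from the lemma are dominated by these gaps. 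This "exponential separation of scales" is exactly the mechanism flagged in the paragraph following Lemma~\ref{le:alpha_bound}, and it is the engine for every case.

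For the \textbf{crossing} case, suppose $\sigma,\tau\in T$ cross, so there exist $a\in\sigma\setminus\tau$, $b\in\sigma\cap\tau$, $c\in\tau\setminus\sigma$. I would argue that $p\in H_\sigma\cap H_\tau$ forces both $\diam_\sigma(p)$ and $\diam_\tau(p)$ to be tightly controlled, yet the shared element $b$ ties the two clusters together at incompatible scales, contradicting that $a,b$ and $b,c$ must each be close while $a,c$ need not be. For the \textbf{acyclic} case, suppose $\tau_1\prec\cdots\prec\tau_k\prec\tau_1$ is a cycle; each relation $\tau_i\prec\tau_{i+1}$ gives elements $x_i\in\tau_i$, $y_i\in\tau_{i+1}$ with $x_i\prec y_i$, so $p_{x_i}\le p_{y_i}$ for $p\in\mathscr L(P)$, and summing around the cycle yields a chain of inequalities that closes up to force equalities everywhere, collapsing a tube and contradicting $\diam_{\tau_i}(p)\ge\tfrac{4}{n^2}n^{2|\tau_i|}>0$. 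The \textbf{improper tube} case ($|\tau|=|P|=n$, i.e.\ $\tau=P$) is the cleanest: then $\alpha_\tau=\alpha_P$, so $p\in H_\tau$ means $\alpha_P(p)=n^{2n}$, but $p\in H_P$ already imposes $\alpha_P(p)=n^{2n}$—wait, these agree, so the genuine obstruction is that an improper tube equals $P$ and $H_P$ is the normalization hyperplane, meaning we must instead rule out $\tau$ with $|\tau|=|P|$ appearing \emph{as a proper tube in $T$}; I would show $p\in H_P\cap h_\sigma$ for any tube $\sigma$ is consistent, so impropriety is caught by the crossing/nesting bookkeeping relative to $P$ itself.

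The main obstacle I anticipate is the crossing case: making the scale-separation argument fully rigorous requires carefully tracking which covering relations lie inside $\sigma\cap\tau$, $\sigma$, and $\tau$, and showing that the overlapping region cannot simultaneously satisfy the tight-cluster constraint from both $H_\sigma$ and $H_\tau$ while the non-overlapping parts stay at their own larger scales. The inequalities from Lemma~\ref{le:alpha_bound} carry lossy constants ($n^2/4$), so the argument only works because the base-$n^2$ exponential gaps between $n^{2|\sigma|}$, $n^{2|\tau|}$, and $n^{2|\sigma\cup\tau|}$ strictly dominate all accumulated polynomial factors; I would verify this dominance with a clean induction or a direct inequality $\tfrac{n^2}{4}\cdot n^{2m}<n^{2(m+1)}$, which holds for all $n\ge 2$. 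Once the constants are pinned down, the remaining cases reduce to routine sign-chasing.
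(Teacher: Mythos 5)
Your proposal has the right general mechanism (scale separation via Lemma~\ref{le:alpha_bound}) and the right case split, but both of your case arguments are missing the one step that actually produces the contradiction, and the substitutes you describe do not work. The paper's proof, in both the crossing case and the cycle case, forms the \emph{strictly larger} tube $\sigma = \conv(\tau_1 \cup \dots \cup \tau_k)$, checks that this hull is a tube (so that $h_\sigma$ is among the defining half-spaces of $\mathscr A(P)$, or is $H_P$ itself), bounds $\diam_\sigma(p)$ by a sum of the small diameters $\diam_{\tau_i}(p) \le n^{2(|\sigma|-1)}$ (using the shared point in the crossing case, and chaining around the cycle in the acyclicity case), and then concludes $\alpha_\sigma(p) \le \frac{n^2}{4}\diam_\sigma(p) < n^{2|\sigma|}$, i.e.\ $p \notin h_\sigma$. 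Nothing in your write-up ever invokes the constraint of a tube strictly containing the offending ones, and without it there is simply no contradiction to be had. Concretely, take $P$ the chain $1 \prec 2 \prec 3$ (so $n = 3$) and the crossing tubes $\sigma = \{1,2\}$, $\tau = \{2,3\}$: the point $p = (-81, 0, 81)$ is order-preserving, lies on $H_\sigma \cap H_\tau$ (both $\alpha$'s equal $3^4 = 81$), and satisfies every proper-tube half-space; it is excluded from $\mathscr A(P)$ \emph{only} because $\alpha_P(p) = 162 \ne 3^6 = 729$, i.e.\ by the hyperplane of the larger tube $P = \conv(\sigma \cup \tau)$. So your claimed crossing contradiction (``the shared element $b$ ties the two clusters together at incompatible scales'') does not exist—two crossing tubes' hyperplane constraints are mutually consistent, especially when $|\sigma| = |\tau|$, and ``$a,c$ need not be close'' is not a contradiction of anything.

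Your cycle argument fails for the same structural reason plus an incorrect claim: the inequalities around the cycle do \emph{not} ``close up to force equalities.'' Each passage through a tube $\tau_i$ incurs slack of size $\diam_{\tau_i}(p)$ (the elements $y_{i-1}$ and $x_i$ of $\tau_i$ are in general distinct and unrelated), so going around the cycle yields only $p_{x_1} \le p_{x_1} + \sum_i \diam_{\tau_i}(p)$, which is vacuous; and indeed at a point of $\bigcap_i H_{\tau_i}$ every $\diam_{\tau_i}(p)$ is strictly positive, so nothing collapses and your claimed contradiction with $\diam_{\tau_i}(p) > 0$ never materializes. The correct conclusion from the chaining is that the \emph{union} of the cycle's tubes has diameter $O(k \cdot n^{2(|\sigma|-2)})$, which is then played off against the half-space $h_\sigma$ of the hull $\sigma$—again the larger-tube constraint you never introduce. (A minor further point: your third ``improper tube'' case is a red herring; as you half-observed, $H_P \cap \mathscr A(P) = \mathscr A(P) \ne \emptyset$, so the lemma must be—and in the paper implicitly is—read as a statement about collections of \emph{proper} tubes, and that case should simply not appear in the analysis.)
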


\begin{lemma}\label{le:interior}
If $T$ is a maximal tubing and $\tau \notin T$ is a proper tube, then $\alpha_\tau(v^T) > n^{2|\tau|}.$  That is, $v^T$ lies in the interior of $h_\tau$.
\end{lemma}

Lemma \ref{le:vertices} follows from a standard induction argument.

\begin{proof}[Proof of Lemma \ref{le:incompatible}]
If $T$ is not a collection of tubes that do proper tubing, then at least one of the following two cases holds:
\begin{enumerate}[label = (\arabic*)]
\item There is a pair of non-nested and non-disjoint tubes $\tau_1, \tau_2$ in $T$. 
\item There is a sequence of disjoint tubes $\tau_1, ..., \tau_k$ such that $\tau_1 \prec \dots \prec \tau_k \prec \tau_1$.  
\end{enumerate}

The idea of the proof is as follows: For $S \subseteq P$, define the \emph{convex hull} of $S$ as $$\conv(\sigma) := \{b \in P \mid  \exists a, c \in S : a \le b \le c \}.$$	Observe that if $p \in \mathscr L(P),$ then $\diam_S(p) \le \diam_{\conv(S)}(p)$.
Take $\sigma = \conv(\tau_1 \cup \dots \cup \tau_k)$. One can show that $\sigma$ is a tube, so Lemma \ref{le:alpha_bound} tells us that for each $\tau_i$, $\diam_{\tau_i}(p)$ is very small compared to $n^{2|\sigma|}$.  As the tubes either intersect or are cyclic, one can show this forces $\diam_{\sigma}(p)$ to also be small, so $\alpha_\sigma(p) < n^{2|\sigma|}$.

More concretely, suppose that $$p \in \bigcap H_{\tau_i} \cap \mathscr L(P) .$$
Note that for all $i$, $|\sigma| > |\tau_i| + 1$ and $\diam_{\tau_i}(p) \le n^{2(|\sigma|-1)}$. 
In case (1), let $a,b \in \sigma$.  There exists some $x \in \tau_1 \cap \tau_2$, so 
$$\begin{aligned}
|p_a -  p_b| &\le |p_a - p_x| + |p_x - p_b|
\\&\le \diam_{\tau_1}(p) + \diam_{\tau_2}(p)
\\ &\le 2n^{2(|\sigma|-1)} 
\\&<n^{2(|\sigma|)} .
\end{aligned}$$
Hence $\diam_\sigma(p) < n^{2|\sigma|}$, so by Lemma \ref{le:alpha_bound},  $p \notin h_\sigma$.

Now we move to case (2). Suppose there is a sequence of disjoint tubes $\tau_1, ..., \tau_k$ such that for each $i$ there exists $x_i, y_i \in \tau_i$ where $x_i \prec y_{i+1}$ where we take the indices $\text{mod }{k}$.
Then:
$$\begin{aligned}
  p_{y_i} - \diam_{\tau_i}(p) &\le p_{x_i}
\\p_{x_i} &\le p_{y_{i+1}}
\\p_{y_{i+1}} &\le p_{x_{i+1}} + \diam_{\tau_{i+1}}
\end{aligned}$$

Furthermore, since $\tau_i \tand \tau_{i+1}$ are disjoint, $|\tau_i| \le |\sigma| - 2$ and $\diam_{\tau_i} \le n^{2(|\sigma|-2)}$. Combining these we get $$p_{y_i} \le p_{y_{i+1}} + 2n^{2(|\sigma|-2)}.$$
Then we have: 
$$\begin{aligned} p_{y_1} & \le p_{y_{i}} + 2in^{2(|\sigma|-2)} & \text{ and }\\
p_{y_{i}} + 2in^{2(|\sigma|-2)}  &\le p_{y_1} + 2(k+1) n^{2(|\sigma|-2)}. \end{aligned}$$
These yield
$$\begin{aligned}
 p_{y_1} - p_{y_i} &\le 2in^{2(|\sigma|-2)} & \text{ and }\\
  p_{y_i} - p_{y_1} &\le 2(k - i+1) n^{2(|\sigma|-2)}.
  \end{aligned}$$  As $i, k-i+1 \le k \le \frac{n}{2}$, we have $|p_{y_1} - p_{y_i}| \le n^{2(|\sigma|-1)}$.  Finally, if $z_i \in \tau_i, z_j \in \tau_j$, then 
  $$\begin{aligned}
  |p_{z_i} - p_{z_j}| &\le |p_{z_i} - p_{y_i}| +|p_{y_i} - p_{y_1}| + |p_{y_1} - p_{y_j}|  + |p_{y_j} - p_{z_j}| \\
  &\le 4 n^{2(|\sigma|-1)} \\
  &< n^{2|\sigma|}.
  \end{aligned}$$

Hence $\diam_{\sigma} (p) < n^{2|\sigma|}$, and by Lemma \ref{le:alpha_bound},  $p \notin h_\sigma$.

\end{proof}

\begin{proof}[Proof of Lemma \ref{le:interior}]

Let $T$ be a maximal tubing of $P$ and let $\tau \notin T$ be a tube.  Define the \emph{convex hull} of $\tau$ \emph{relative} to $T$ by $$\conv_T(\tau) := \min\{\sigma \in T \mid \tau \subset \sigma\}.$$  Let $\sigma = \conv_T(\tau)$.
$T$ partitions $\sigma$ into a lower set $A$ and an upper set $B$ where $A$ and $B$ are either tubes or singletons. Furthermore, $A$ and $B$ both intersect $\tau$.  See Figure \ref{fig:Interior_Lemma_Sketch} for an example illustrating this.

\begin{figure}
  \makebox[1.0\textwidth]{
\scalebox{1}{
\ \ \quad\begin{tabular}{cc|cc}
 \includegraphics{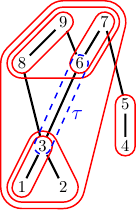}& \qquad &

\qquad & 
  \includegraphics{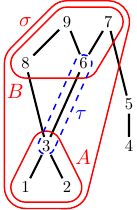}\\ &&& \\
 Maximal Tubing $T$ and tube $\tau$ &&&  $\sigma, A,$ and $B$ labelled  
\end{tabular}
}
}
  \caption{\label{fig:Interior_Lemma_Sketch}An example illustrating the proof of Lemma \ref{le:interior}.}
\end{figure}

The idea of the proof is as follows:
Let $p = v^T$.  By Lemma \ref{le:alpha_bound}, $\diam_A(p)$ and $\diam_B(p)$ are both very small compared to $\diam_\sigma(p)$.  Then for any $a \in A, b \in B$, $|p_a - p_b|$ must be large.  As $\tau$ intersects both $A$ and $B$, $\diam_\tau(p)$ must be large and hence $p \in h_\tau$.  See Figure \ref{fig:Interior_Lemma_Diameter} for an illustration of this. More precisely, we show that for any $i \in A, j \in B$, $p_j - p_i > (n^2)^{|\tau|}$, which implies that $p$ lies in the interior of $h_\tau$.

\begin{figure}

\makebox[1.0\textwidth]{
\scalebox{2}{
\includegraphics{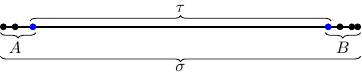}
}}

  \caption{\label{fig:Interior_Lemma_Diameter} If $\diam_A(p)$ and $\diam_B(p)$ are small and $\diam_\sigma(p)$ is large, then $\diam_\tau(p)$ is large.}

\end{figure}

 Observe that:
$$\sum\limits_{x \precdot y} p_y - p_x = 
\underbrace{\sum\limits_{\substack{x \precdot y \\ x, y \in A}} (p_y - p_x)}_{\substack{\le (n^2)^{|\sigma|-1}\\ < \frac18 (n^2)^{|\sigma|}}}+ 
\underbrace{\sum\limits_{\substack{x \precdot y \\ x, y \in B}} (p_y - p_x )}_{\substack{\le (n^2)^{|\sigma|-1}\\ < \frac18 (n^2)^{|\sigma|}}}+
 \sum\limits_{\substack{x \precdot y  \\ x \in A, y \in B}} (p_y - p_x).$$ 


Fix $i \in A \tand j \in B$.  By Lemma \ref{le:alpha_bound}, for any $x \in A, y \in B$, 
$$\begin{aligned}
p_y - p_x &\le p_j - p_i + \diam_A(p) + \diam_B(p)
\\& \le p_j + p_i + 2n^{2(|\sigma|-1)}.
\end{aligned}$$ 
Again, noting that the number of covering relations in $\sigma$ is at most $\frac{n^2}{4}$ we obtain:

$$\begin{aligned} \sum\limits_{\substack{x \precdot_\sigma y  \\ x \in A, y \in B}} (p_y - p_x) &\le  \sum\limits_{\substack{x \precdot_\sigma y  \\ x \in A, y \in B}} (p_j - p_i + 2(n^2)^{|\sigma|-1})
\\&\le \frac{n^2}{4}\left( p_j - p_i + 2(n^2)^{|\sigma|-1} \right)
\\&= \frac{n^2}{4}(p_j - p_i) + \frac12(n^2)^{|\sigma|}.
\end{aligned}$$

Combining all of this we get:

$$
\begin{aligned}
\sum\limits_{x \precdot_\sigma y} p_y - p_x &= (n^2)^{|\sigma|}
\\& < \frac{1}{8} (n^2)^{|\sigma|} + \frac{1}{8} (n^2)^{|\sigma|} + \frac{1}{2} (n^2)^{|\sigma|} +  \frac{n^2}{4}(p_j - p_i) 
\\&\le \frac{3}{4}(n^2)^{|\sigma|} + \frac{n^2}{4}(p_j - p_i) 
\end{aligned}
$$

Then $(n^2)^{|\sigma|-1} < (p_j - p_i)$ and as $|\tau| \le |\sigma| - 1$, $p$ is in the interior of  $h_\tau$.

\end{proof}

\begin{remark}
A similar approach for realizing graph associahedra is taken by Devadoss~\cite{devadoss2009realization}. For a graph $G=(V,E)$, Devadoss realizes the graph associahedron of $G$ by taking the supporting hyperplane for a graph tube $\tau$ to be $$\left\{p \in \mathbb R^V \mid \sum\limits_{i \in \tau} p_i = 3^{|\tau|}\right\}.$$
One difference is that Devadoss realizes graph associahedra by cutting off slices of a simplex whereas we cut off slices of an order polytope. When the Hasse diagram of $P$ is a tree, the poset associahedron is combinatorially equivalent to the graph associahedron of the line graph of the Hasse diagram. In this case, the two realizations have linearly equivalent normal fans. 
If the Hasse diagram of $P$ is a path graph, then both realizations have linearly equivalent normal fans to the realization of the associahedron due to Shnider and Sternberg~\cite{StasheffCyclohedron}.
\end{remark}

\section{Realizing affine poset cyclohedra}

The proofs in the affine case are nearly identical to the finite case with some additional technical components.  The similarity comes from the fact that Lemma \ref{le:alpha_bound} still applies.  We highlight where the proofs are different.   Let $\tilde P$ be an affine poset of order $n$. 

Define 
$$\begin{aligned}
\mathscr C(\tilde P) &:= \bigcap\limits_{\sigma \subset P} h_\sigma  & \text{ and }\\
\mathscr L(\tilde P) &:=  \{p \in \mathbb R^{\tilde P}/\mathcal C \mid p_i \le p_j \text{ for all } i \preceq j\}. & 
\end{aligned}$$ where the intersection is over all tubes of $\tilde P$.  Note that $\mathscr C(\tilde P) \subseteq \mathscr L(\tilde P)$ as if $i \precdot j$ is a covering relation, then for $p \in h_{\{i, j\}}$, $p_i \le p_j$.
Theorem \ref{thm:main_thm_affine} follows as a result of 3 lemmas:

\begin{lemma}\label{le:vertices_affine}
If $T$ is a maximal tubing, then $$v^T := \bigcap\limits_{\tau \in T} H_\tau $$ is a point.  
\end{lemma}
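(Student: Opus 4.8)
The plan is to recast the statement as a claim about linear independence and prove it by induction on $n = |\tilde P|$, contracting one periodic orbit of minimal tubes at each step, exactly paralleling the induction behind Lemma~\ref{le:vertices} but carrying the $\Z$-action along. First I would reduce to linear algebra. The ambient space $\R^{\tilde P}/\mathcal C$ has dimension $n-1$. By Remark~\ref{re:nperiodic} we have $H_\tau = H_{\tau+kn}$, so although $T$ is infinite, the family $\{H_\tau : \tau \in T\}$ consists of only finitely many distinct hyperplanes, one for each orbit of $T$ under $\tau \mapsto \tau + n$; write $\overline T$ for a set of orbit representatives. Then $v^T = \bigcap_{\tau \in \overline T} H_\tau$ is a finite intersection of hyperplanes, and it is a single point precisely when the normal functionals $\{\alpha_\tau : \tau \in \overline T\}$ form a basis of $(\R^{\tilde P}/\mathcal C)^*$: a basis gives a unique (in particular nonempty) common solution of the inhomogeneous system $\alpha_\tau(p) = n^{2|\tau|}$, so the specific thresholds never enter the argument. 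Thus it suffices to show $|\overline T| = n-1$ and that these functionals are linearly independent.

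Next I would set up the induction. The base case $n=1$ is immediate: $\R^{\tilde P}/\mathcal C$ is a point and $\overline T$ is empty. For the inductive step, I first observe that any minimal tube $\tau_0 \in T$ has $|\tau_0| = 2$: a tube of size $\ge 3$ is a connected poset containing a covering pair $\{a \precdot b\}$, which is itself a proper tube nested inside $\tau_0$, so its orbit could be adjoined to $T$ without violating the nesting/disjointness, acyclicity, or periodicity conditions, contradicting maximality. Fixing such a minimal orbit, represented by a cover $e = \{a \precdot b\}$, I would form the quotient $\tilde P' := \tilde P/(e + n\Z)$ by identifying $a + kn$ with $b + kn$ for all $k$ and taking the transitive reduction, as in the generalized-word picture of Figure~\ref{fig:generalized_words}. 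One checks that $\tilde P'$ is again an affine poset, now of order $n-1$, and that the remaining tubes of $T$ descend to a maximal $(n-1)$-periodic tubing $T'$ of $\tilde P'$, with orbit representatives $\overline{T'}$ in natural bijection with $\overline T \setminus \{e\}$.

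Finally I would compare functionals across the contraction. Restricting to the hyperplane $H_e$ and projecting realizes $\R^{\tilde P'}/\mathcal C'$ (where $\mathcal C'$ is the subspace of constant maps for $\tilde P'$) as a quotient of $\R^{\tilde P}/\mathcal C$ in which $\alpha_e(p) = p_b - p_a$ pins the fibre direction, while the functionals $\alpha_{\tau'}$ for $\tau' \in \overline{T'}$ pull back to the corresponding $\alpha_\tau$ on the base; by the inductive hypothesis the latter form a basis of the $(n-2)$-dimensional $(\R^{\tilde P'}/\mathcal C')^*$, and together with $\alpha_e$ they give a basis of $(\R^{\tilde P}/\mathcal C)^*$, completing the induction. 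The main obstacle is precisely this last comparison: contracting the orbit $e + n\Z$ alters the covering relations, since several covers through the merged vertices collapse when one passes to the transitive reduction, so one must verify carefully that $\alpha_{\tau'}$ and $\alpha_\tau$ agree after the identification and that the whole operation is genuinely $\Z$-equivariant, checking in particular that strong connectivity and $n$-periodicity pass to $\tilde P'$ and that no acyclic-tubing violation is created. This bookkeeping is the extra technical content the affine case carries beyond Lemma~\ref{le:vertices}.
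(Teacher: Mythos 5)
Your reduction to linear algebra and the orbit count are fine, but the inductive step rests on a claim that is false, and it is exactly the one you defer as ``bookkeeping'': after contracting the orbit of a minimal tube $e=\{a\precdot b\}$, the quotient functional $\alpha_{\tau'}$ does \emph{not} pull back to $\alpha_\tau$, and the discrepancy cannot be absorbed by rescaling, by adding multiples of $\alpha_e$, or by a cleverer choice of the projection $\pi$. The reason is that $\alpha_\tau$ is a sum over \emph{covering} relations, and contraction destroys covers. Concretely, let $\tau\supset e$ induce the poset on $\{x,y,z,a,b\}$ with $x\prec a\precdot b$, $x\prec z\prec b$, $y\prec a$, and $z$ incomparable to $a$, $y$ incomparable to $x$ and $z$ (such a $\tau$ is a tube, $e=\{a,b\}$ is a tube since nothing lies strictly between $a$ and $b$, and both fit in a maximal periodic tubing). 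The covers in $\tau$ are $(x,a),(y,a),(a,b),(x,z),(z,b)$, so $\alpha_\tau=p_a+2p_b-2p_x-p_y$; restricted to $\ker\alpha_e$ (where $p_a=p_b=t$, which is where the independence comparison must happen --- note that on $H_e$ itself $p_b-p_a=n^4\neq 0$, so points of $H_e$ do not literally descend) this is $3t-2p_x-p_y$. In the quotient, $[z]$ lies strictly between $[x]$ and $[ab]$, so the cover $(x,a)$ disappears; the covers of $\tau'$ are $([x],[z]),([z],[ab]),([y],[ab])$, giving $\alpha_{\tau'}=2t-p_x-p_y$. The two forms are not proportional, and no choice of lift of the coordinate $p'_{[ab]}$ puts $\pi^*\alpha_{\tau'}$ into $\mathrm{span}(\alpha_\tau,\alpha_e)$. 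Hence the inductive hypothesis (independence of the $\alpha_{\tau'}$) says nothing about independence of the restrictions $\alpha_\tau|_{\ker\alpha_e}$, and the induction does not close. (Your scheme would behave much better for the spanning-tree variant of $\alpha_\tau$ in the paper's final remark, since spanning trees contract cleanly; it fails for the cover-sum definition actually used.)

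For comparison, the paper's proof avoids induction on $n$ and all quotient-poset theory. Since $T$ is maximal it contains a tube $\sigma$ with $|\sigma|=n$; its translates $\sigma+kn$ lie in $T$ and partition $\Z$, so every $\tau\in T$, being nested with each of them and of size at most $n$, satisfies $\tau\subseteq\sigma+kn$ for some $k$, i.e.\ $\tau-kn\subseteq\sigma$ with $\tau - kn \in T$. Since $\alpha_\tau$ and the threshold $n^{2|\tau|}$ are translation-invariant (as in Remark~\ref{re:nperiodic}), $H_\tau=H_{\tau-kn}$, so $\bigcap_{\tau\in T}H_\tau=\bigcap_{\tau\in T,\,\tau\subseteq\sigma}H_\tau$. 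The tubes of $T$ inside $\sigma$ form a maximal tubing of the finite connected poset $\tilde P|_\sigma$ together with $\sigma$ itself, and restriction gives $\R^{\tilde P}/\mathcal C\cong\R^{\sigma}_{\Sigma=0}$, so Lemma~\ref{le:vertices} applies verbatim and yields a point. All the structure your argument must build and verify (that the contraction is an affine poset, that tubings descend to maximal tubings, equivariance) is sidestepped; the only genuinely affine input is periodicity of $T$ and of the hyperplanes.
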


\begin{lemma}\label{le:incompatible_affine}

If $T$ is a collection of tubes that do not form a proper tubing, then $$\bigcap\limits_{\tau \in T} H_\tau \cap \mathscr C(\tilde P) = \emptyset.$$

\end{lemma}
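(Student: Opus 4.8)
The plan is to mirror the proof of Lemma~\ref{le:incompatible} as closely as possible, exploiting the fact that Lemma~\ref{le:alpha_bound} applies verbatim in the affine setting. As before, if $T$ fails to be a proper tubing then either (1) it contains a pair of tubes $\tau_1,\tau_2$ that are neither nested nor disjoint, or (2) it contains a cyclic chain $\tau_1 \prec \dots \prec \tau_k \prec \tau_1$ of pairwise disjoint tubes. In either case I would build a single tube $\sigma$ containing all the relevant $\tau_i$ and derive a contradiction by showing that any $p \in \bigcap H_{\tau_i} \cap \mathscr L(\tilde P)$ forces $\diam_\sigma(p) < n^{2|\sigma|}$, so that $p \notin h_\sigma$ by Lemma~\ref{le:alpha_bound}. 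The estimates in case (1) are essentially identical, since they only involve two tubes sharing a common element $x$ and the triangle inequality, neither of which sees the affine structure.

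The main obstacle, and the reason this is not a literal copy of the finite proof, lies in case (2). In the finite case one uses that a cycle has length $k \le \tfrac{n}{2}$ and that $\sigma = \conv(\tau_1 \cup \dots \cup \tau_k)$ is a bona fide tube with $|\sigma|$ large relative to each $|\tau_i|$. In the affine world two subtleties arise. First, the tubes in the periodic tubing $T$ come in infinite families $\{\tau + kn\}$, so a ``cycle'' in the relation $\prec$ must be understood modulo the periodicity; I would need to check that a violation of the acyclic tubing condition still produces a finite cyclic sequence whose convex hull $\sigma$ is a genuine \emph{finite} tube (not all of $\tilde P$). The key point is that the failure of acyclicity means some element $y_1$ satisfies $y_1 \prec \dots \prec y_1 + mn$ along the chain for some $m$, yet the periodicity of the tubing should let me reduce to a bounded sequence representing one period. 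Second, I must verify the bound $\diam_\sigma(p) \le \diam_{\conv(\sigma)}(p)$ and the inequality $|\sigma| \le n+1$ or similar, so that the exponent $n^{2|\sigma|}$ dominates the accumulated error terms $2(k+1)n^{2(|\sigma|-2)}$ exactly as before.

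Concretely, in case (2) I would pick representatives $x_i, y_i \in \tau_i$ with $x_i \prec y_{i+1}$ (indices cyclic) and, just as in the finite proof, chain the inequalities $p_{y_i} \le p_{y_{i+1}} + 2n^{2(|\sigma|-2)}$ around the cycle. The telescoping gives $p_{y_1} - p_{y_i} \le 2in^{2(|\sigma|-2)}$ and $p_{y_i} - p_{y_1} \le 2(k-i+1)n^{2(|\sigma|-2)}$, and since the cycle length is controlled by the order $n$, one recovers $|p_{y_1} - p_{y_i}| \le n^{2(|\sigma|-1)}$, hence $\diam_\sigma(p) < n^{2|\sigma|}$ for any two points of $\sigma$. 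The only genuinely new work is bookkeeping: translating the periodic acyclicity condition into a finite cycle and confirming $\sigma$ is a proper finite tube of bounded size so that all exponential comparisons go through. I expect the convexity and size bound on $\sigma$ to be the delicate step, since in the affine setting one must rule out the degenerate possibility $\sigma = \tilde P$ and ensure $\conv(\tau_1 \cup \dots \cup \tau_k)$ has at most one element per residue class.
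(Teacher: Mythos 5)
Your overall skeleton (the same two cases, convex hulls, diameter estimates via Lemma~\ref{le:alpha_bound}) matches the paper's proof, but you have located the difficulty in the wrong place, and the one genuinely new idea of the affine argument is missing. The paper's proof states that case (2) goes through essentially verbatim and that the \emph{only} difference occurs in case (1): two overlapping tubes $\tau_1, \tau_2$ can wrap around a full period, i.e.\ there can exist $x$ with both $x$ and $x+n$ in $\tau_1 \cup \tau_2$ (for instance, two overlapping intervals in an affine chain whose union has more than $n$ elements). In that situation there is \emph{no} proper tube $\sigma$ containing $\tau_1 \cup \tau_2$: any connected convex set containing both $x$ and $x+n$ has two elements in one residue class, so it is not a finite tube, and $h_\sigma$ is not among the half-spaces defining $\mathscr C(\tilde P)$. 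Your plan for case (1) --- ``build a single tube $\sigma$ \dots so that $p \notin h_\sigma$'' --- therefore cannot be carried out there, and your claim that case (1) ``doesn't see the affine structure'' is exactly backwards. The paper's fix bypasses $h_\sigma$ entirely: $p \in H_{\tau_1} \cap H_{\tau_2}$ still gives $\diam_{\tau_1 \cup \tau_2}(p) \le \diam_{\tau_1}(p) + \diam_{\tau_2}(p) \le 2n^{2n}$, while the definition of $\mathbb R^{\tilde P}/\mathcal C$ forces $|p_{x+n} - p_x| = n^{2(n+1)} > 2n^{2n}$, an immediate contradiction.

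Relatedly, in case (2) you correctly sense the danger (the hull might not be a genuine finite tube), but your proposed resolution is to ``rule out'' or ``ensure'' that $\conv(\tau_1 \cup \dots \cup \tau_k)$ has at most one element per residue class. This is not something one can ensure --- wrap-around can genuinely occur --- and when it does, the correct move is again the direct periodicity contradiction: the small-diameter estimates you chain around the cycle are incompatible with $|p_{x+n} - p_x| = n^{2(n+1)}$ whenever $x$ and $x+n$ both lie among the points being compared. So the missing ingredient in both cases is the same: use the constant $c = n^{2(n+1)}$ built into the ambient space as the source of the contradiction whenever the offending tubes span a full period, rather than trying to produce a half-space $h_\sigma$. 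A further minor omission: as in Remark~\ref{re:nperiodic}, one should first replace $T$ by its $n$-periodic closure (this does not change $\bigcap_{\tau \in T} H_\tau$), which is what makes the two-case analysis exhaustive for collections that fail to be proper tubings.
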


\begin{lemma}\label{le:interior_affine}
If $T$ is a maximal tubing and $\tau \notin T$ is a proper tube, then $\alpha_\tau(v^T) > n^{2|\tau|}.$  That is, $v^T$ lies in the interior of $h_\tau$.
\end{lemma}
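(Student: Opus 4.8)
The plan is to follow the finite-case proof of Lemma~\ref{le:interior} as closely as possible: set $p = v^T$ and split on whether $\tau$ is contained in an inclusion-maximal tube of $T$. As in the finite case one first records that $v^T \in \mathscr L(\tilde P)$, so that Lemma~\ref{le:alpha_bound} is available for every tube; since $\tilde P$ is infinite, these bounds are only ever applied to finite tubes, each of which has at most $n$ elements and hence at most $n^2/4$ covering relations, exactly as before. The key new structural input is a description of the inclusion-maximal tubes of a maximal tubing $T$: I claim they form a single periodicity class, i.e.\ there is a tube $\Sigma \in T$ with $|\Sigma| = n$ (one element per residue class) such that the inclusion-maximal tubes of $T$ are precisely the translates $\Sigma_k := \Sigma + kn$, and these partition $\mathbb{Z}$. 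This follows from a counting argument: $\mathscr C(\tilde P)$ is simple of dimension $n-1$ and $H_\tau = H_{\tau+kn}$ (Remark~\ref{re:nperiodic}), so $T$ has exactly $n-1$ periodicity classes of tubes; a maximal tube of size $s$ contributes $s-1$ classes nested weakly inside it, so if the maximal tubes have size classes $s_1,\dots,s_m$ and $f$ residue classes remain uncovered, then $\sum_i(s_i-1) = n-1$ and $\sum_i s_i + f = n$, forcing $m = 1$, $f = 0$, and $s_1 = n$. (This may alternatively be read off from the proof of Lemma~\ref{le:vertices_affine}.)

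\emph{Case 1: $\tau$ is contained in some $\Sigma_k$.} Then $\conv_T(\tau) := \min\{\sigma' \in T \mid \tau \subseteq \sigma'\}$ is a well-defined finite tube $\sigma \in T$, and the finite argument applies verbatim. As $T$ is maximal it partitions $\sigma$ into a lower set $A$ and an upper set $B$, both meeting $\tau$ by minimality of $\sigma$, and $v^T \in H_\sigma$ gives $\alpha_\sigma(p) = n^{2|\sigma|}$. Bounding the intra-$A$, intra-$B$, and crossing covering sums exactly as in Lemma~\ref{le:interior} yields $p_j - p_i > (n^2)^{|\sigma|-1} \ge (n^2)^{|\tau|}$ for some $i \in A\cap\tau$, $j \in B\cap\tau$, whence $\diam_\tau(p) > n^{2|\tau|}$ and $\alpha_\tau(p) \ge \diam_\tau(p) > n^{2|\tau|}$ by Lemma~\ref{le:alpha_bound}.

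\emph{Case 2: $\tau$ is contained in no $\Sigma_k$.} This is the genuinely affine case. Since the blocks $\Sigma_k$ partition $\mathbb{Z}$, the tube $\tau$ meets at least two of them; choose $a < b$ and elements $i \in \tau \cap \Sigma_a$, $j \in \tau \cap \Sigma_b$. Because $\Sigma_b = \Sigma_a + (b-a)n$, the element $j-(b-a)n$ lies in $\Sigma_a$, and the period map shifts all values on $\Sigma_a$ to those on $\Sigma_b$ by the fixed magnitude $(b-a)c$, so $|p_j - p_{j-(b-a)n}| = (b-a)c$. Since $i$ and $j-(b-a)n$ both lie in $\Sigma_a$, a translate of $\Sigma \in T$, Lemma~\ref{le:alpha_bound} bounds their separation by $\diam_\Sigma(p) \le \alpha_\Sigma(p) = n^{2n}$. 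With $c = n^{2(n+1)} = n^2 \cdot n^{2n}$, $b-a \ge 1$, and $|\tau| \le n$, the triangle inequality gives
\[
\diam_\tau(p) \ge |p_j - p_i| \ge (b-a)c - \diam_\Sigma(p) \ge c - n^{2n} = (n^2-1)\,n^{2n} > n^{2|\tau|},
\]
and Lemma~\ref{le:alpha_bound} again yields $\alpha_\tau(p) > n^{2|\tau|}$.

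The main obstacle is isolating and justifying the structural fact about inclusion-maximal tubes; once it is in hand, Case~2 is short, and its entire force rests on the separation of scales $c = n^{2(n+1)} \gg n^{2n} \ge \diam$ of any proper tube — which is precisely why the constant was chosen to be $n^{2(n+1)}$ rather than $n^{2n}$. A secondary point to check is that all period bookkeeping is consistent in the quotient $\mathbb{R}^{\tilde P}/\mathcal C$, so that $\alpha_\tau$ and the differences $p_j - p_i$ appearing above are genuinely well defined there.
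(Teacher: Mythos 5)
Your proof is correct and follows essentially the same route as the paper's: reduce to the finite-case argument of Lemma~\ref{le:interior} when $\tau$ lies inside a tube of $T$, and otherwise exploit the periodicity scale $c = n^{2(n+1)} \gg n^{2n}$ to force $\diam_\tau(v^T) > n^{2|\tau|}$ and conclude via Lemma~\ref{le:alpha_bound}. The only real difference is that you prove, via the counting argument, the structural fact the paper merely asserts (that the inclusion-maximal tubes of $T$ form a single periodicity class of size-$n$ tubes partitioning $\mathbb{Z}$, so that $\tau$ must meet two translates), which makes your write-up slightly more complete than the paper's and also avoids the paper's implicit assumption that $\tau$ meets two \emph{consecutive} translates.
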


\begin{proof}[Proof of Lemma \ref{le:vertices_affine}]
Let $T$ be a maximal tubing and take any $\sigma \in T$ such that $|\tau| = n$.  Then restricting to $\tilde P|_{\sigma}$, Lemma \ref{le:vertices} implies that $$\bigcap\limits_{\substack{\tau \in T \\ \tau\subseteq \sigma}} H_\tau$$ is a point.  However, as $T$ is $n$-periodic, $$\bigcap\limits_{\substack{\tau \in T \\ \tau\subseteq \sigma}} H_\tau = \bigcap\limits_{\substack{\tau \in T}} H_\tau .$$ 
\end{proof}

\begin{proof}[Proof of Lemma \ref{le:incompatible_affine}]
By Remark \ref{re:nperiodic}, we can assume $T$ is $n$-periodic.  The proof is almost identical to the proof of Lemma \ref{le:incompatible}.  Define $$\mathscr L(\tilde P) :=  \{p \in \mathbb R^{\tilde P}/\mathcal C \mid p_i \le p_j \text{ for all } i \preceq j\}.$$
and note that $$\mathscr L(\tilde P) \subseteq R^{\tilde P}/\mathcal C \bigcap\limits_{\substack{i, j \in \tilde P \\ i \precdot j}} h_{\{i, j\}}.$$

Let  $$p \in \bigcap H_{\tau_i} \cap \mathscr L(\tilde P) .$$ We again break into two cases: 
\begin{enumerate}[label = (\arabic*)]
\item There is a pair of non-nested and non-disjoint tubes $\tau_1, \tau_2$ in $T$. 
\item All tubes in $T$ are pairwise nested or disjoint and there is a sequence of disjoint tubes $\tau_1, ..., \tau_k$ such that $\tau_1 \prec \dots \prec \tau_k \prec \tau_1$.  
\end{enumerate}

The only difference in the proof occurs in case (1).  Here, it is possible that there exists $x \in \tau_1 \cup \tau_2$ such that $x + n \in \tau_1 \cup \tau_2$ as well.  In this case, the proof of Lemma \ref{le:incompatible} still implies that $\diam_{\tau_1 \cup \tau_2}(p) \le \diam_{\tau_1}(p) + \diam_{\tau_2}(p) \le 2n^{2n}$.  However, $|p_{x+n} - p_{x}| = n^{2(n+1)}$.

\end{proof}

\begin{proof}[Proof of Lemma \ref{le:interior_affine}]
Let $T$ be a maximal tubing and $\tau \notin T$ be a proper tube.  Let $p = v^T$.  We claim that  $\alpha_\tau(p) > n^{2|\tau|}.$

The only difference from the proof of Lemma \ref{le:interior} is that $\tau$ may not be contained by any tube in $\tau$ so $\conv_T(\tau)$ may not be well-defined.  In this case, there exists $A \in T$ such that $|A| = n$, $A \cap \tau \neq \emptyset, \tand (A+n) \cap \tau \neq \emptyset$.  Here, $(A+n)$ acts the same as $B$ in the finite case, except the argument is much simpler.  

Let $i \in A \cap \tau, j \in (A+n) \cap \tau$.   Observe that $\diam_A(p), \diam_{(A+n)}(p) \le n^{2n}$ and that $i + n \in (A+n)$.  Then 
$$\begin{aligned}
|p_j - p_i| &\ge (p_j - n^{2n}) - p_i
\\&\ge p_{i+n} - p_i
\\&= n^{2(n+1)}.
\end{aligned}$$

Hence $\diam_{\tau}(p) > n^{2|\tau|}$ and by Lemma \ref{le:alpha_bound}, $\alpha_\tau(p) > n^{2|\tau|}$.

\end{proof}

\section{Remarks and Questions}

\begin{remark}
Let $(P, \preceq)$ be a bounded poset.  In Remark \ref{re:stanley}, we discuss how $\mathscr O(P)$ can be realized as the set of all $p \in \R^P$ such that $p_{\hat 0} = 0$, $p_{\hat 1} = 1$, and $p_i \le p_j$ whenever $i \preceq j$.  We can similarly realize $\mathscr A(P)$ as follows:  Fix $0 < \varepsilon < \frac{1}{n^2}$.

For a proper tube $\tau \subset P$, let $$h'_\tau = \{p \in \R^P \mid \alpha_\tau(p) < \varepsilon^{n-|\tau|}\}.$$
Then $\mathscr A(P)$ is realized as the intersection over all $h'_\tau$ with the hyperplanes $$\{p_{\hat 0} = 0\} \tand \{p_{\hat 1} = 1\}.$$  Letting $\varepsilon \to 0$, we obtain $\mathscr O(P)$ as a limit of $\mathscr A(P)$ as shown in Figure~\ref{fig:CubeLimit}.

\begin{figure}
  \makebox[1.0\textwidth]{
\scalebox{1.0}{
\ \ \quad\begin{tabular}{c|c|c|c}
 \includegraphics[scale = 2]{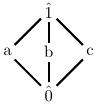}&
 \includegraphics[scale = .28]{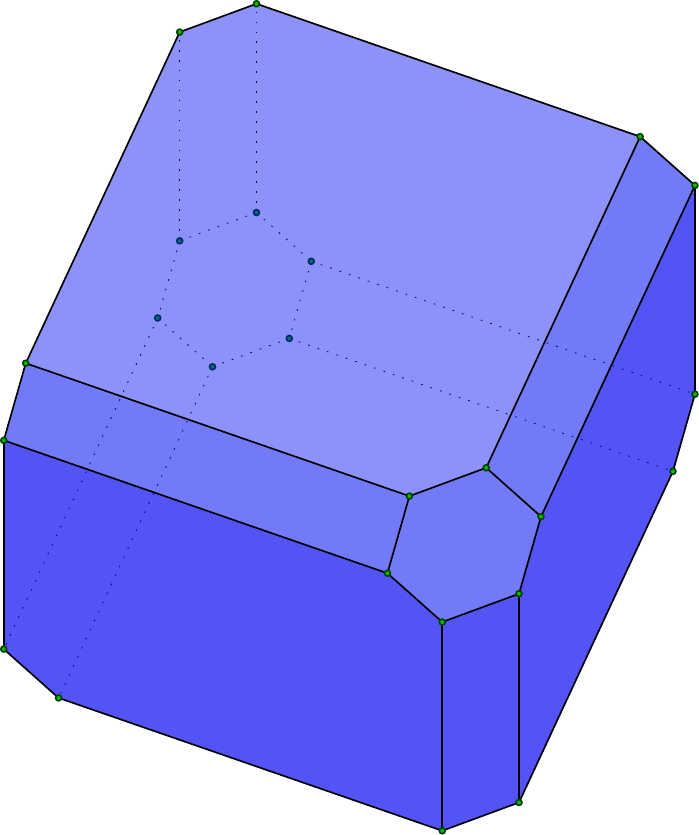}& 
 \includegraphics[scale = .28]{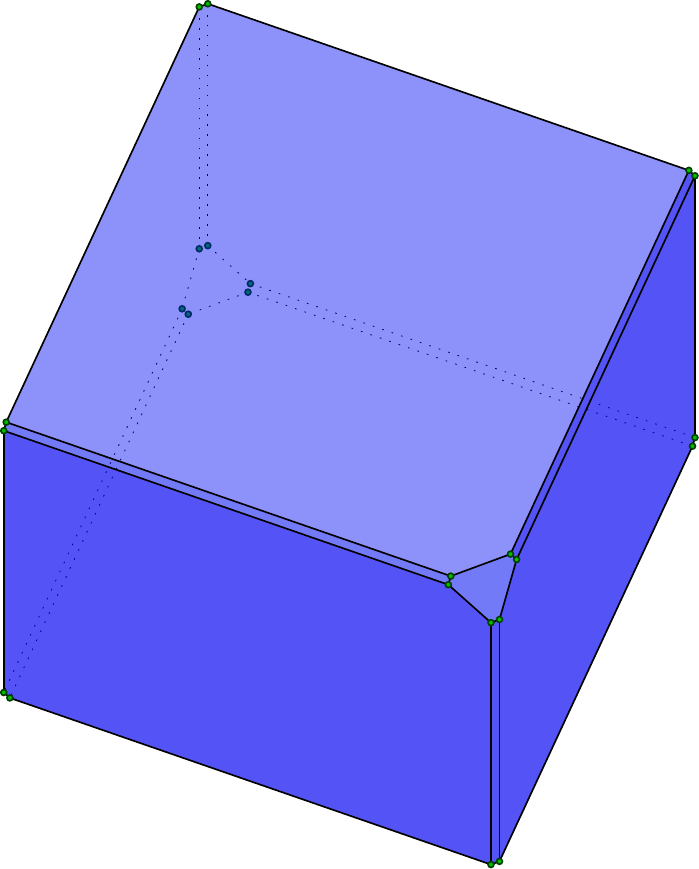}& 
 \includegraphics[scale = .28]{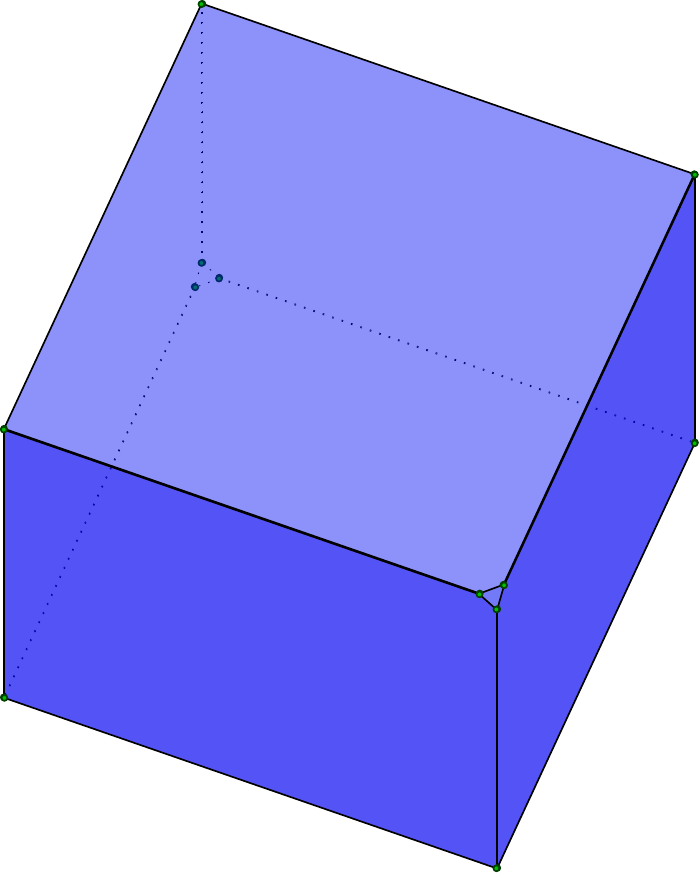}
\\

 $P$ & $\varepsilon = \frac13$ &  $\varepsilon = \frac 19$  & $\varepsilon = \frac 1{27}$
\end{tabular}
}
}
  \caption{\label{fig:CubeLimit} $\mathscr O(P)$ as a limit of $\mathscr A(P)$}
\end{figure}

\end{remark}

\begin{remark}
The key piece to the realizations in Theorems \ref{thm:main_thm_finite} and \ref{thm:main_thm_affine} is the linear form $\alpha_\tau$, where $\alpha_\tau$ acts as an approximation of $\diam_\tau$.   In particular, let $\tau$ be a tube and let $p \in \mathscr L(P)$.  Then:
\begin{itemize}
\item $\alpha_\tau(p) \ge 0$.
\item $\alpha_\tau(p) = 0 \Leftrightarrow p|_{\tau}$ is constant.
\item If $\sigma \subseteq \tau$ is a tube, then $\alpha_\sigma(p) \le \alpha_\tau(p)$.
\end{itemize}

However, there are many other options for choice of $\alpha_\tau$ that could fill this role.  Some other options include:
\begin{enumerate}
\item Sum over all pairs $i \prec j$ in $\tau$. $$\alpha_\tau(p) = \sum\limits_{\substack{i \prec j \\ i, j \in \tau}} p_j - p_i.$$
\item Let $A \tand B$ be the set of minima and maxima of the restriction $P|_{\tau}$ respectively.   
$$\alpha_\tau(p) = \sum\limits_{\substack{i \prec j \\ i \in A, j \in B}} p_j - p_i.$$
\item Fix a spanning tree $T$ in the Hasse diagram of $\tau$.  Let $E = \{(i, j) \mid i \precdot_T j\}$ be the set of edges in $T$.   
$$\alpha_\tau(p) = \sum\limits_{(i, j) \in E} p_j - p_i.$$
An advantage of this option is that we would have $$\diam_\tau(p) \le \alpha_\tau(p) \le (n-1)\diam_\tau(p).$$
\end{enumerate}
A similar realization can be obtained for each choice of of $\alpha_\tau$.  

\end{remark}

\begin{question}
\label{que:h_stat}
Recall that for a simple $d$-dimensional polytope $P$, the $f$-vector and $h$-vector of $P$ are given by $(f_0, \dots, f_d)$ \tand $(h_0, \dots, h_d)$ where $f_i$ is the number of $i$-dimensional faces and 
$$
\sum\limits_{i = 0}^d f_i t^i = \sum\limits_{i = 0}^d h_i (t+1)^i.
$$
Postnikov, Reiner, and Williams~\cite{postnikov2008faces} found a statistic on maximal tubings of graph associahedra of chordal graphs where $$\sum\limits_{T} t^{\stat(T)} =  \sum h_i t^i.$$
In particular, they define a map $T \mapsto w_T$ from maximal tubings of a graph on $n$ vertices to the set of permutations $S_n$ such that $\stat(T) = \des(w_T)$, the number of descents of $w_T$.  It would be interesting to find a similar statistic on maximal tubings of poset associahedra.  For a simple polytope $P$, one can orient the edges of $P$ according to a generic linear form and take $\stat(v) = \outdegree(v)$~\cite[\S 8.2]{ziegler2012lectures}.  It may be possible to use our realization to find the desired statistic.
\end{question}

\section*{Acknowledgements}
The author is grateful to Pavel Galashin for his many helpful comments and suggestions and to Stefan Forcey for fruitful conversations.

\printbibliography

@unpublished{mantovani2023Poset,
author  = {Mantovani, Chiara and Padrol, Arnau and Pilaud, Vincent},
title   = {Acyclonestohedra: when oriented matroids meet nestohedra},
year    = {in prep.},
}

@article{galashin2021poset,
  title={Poset associahedra},
  author={Galashin, Pavel},
  journal={arXiv preprint arXiv:2110.07257},
  year={2021}
}

@article{laplante2022diagonal,
  title={The diagonal of the operahedra},
  author={Laplante-Anfossi, Guillaume},
  journal={Advances in Mathematics},
  volume={405},
  pages={108494},
  year={2022},
  publisher={Elsevier}
}

@article{stanley1986two,
  title={Two poset polytopes},
  author={Stanley, Richard P},
  journal={Discrete \& Computational Geometry},
  volume={1},
  number={1},
  pages={9--23},
  year={1986},
  publisher={Springer}
}

@article{axelrod1994chern,
  title={Chern-Simons perturbation theory. II},
  author={Axelrod, Scott and Singer, Isadore M},
  journal={Journal of Differential Geometry},
  volume={39},
  number={1},
  pages={173--213},
  year={1994},
  publisher={Lehigh University}
}

@article{sinha2004manifold,
  title={Manifold-theoretic compactifications of configuration spaces},
  author={Sinha, Dev P},
  journal={Selecta Mathematica},
  volume={10},
  number={3},
  pages={391--428},
  year={2004},
  publisher={Springer}
}

@article{lambrechts2010associahedron,
  title={Associahedron, cyclohedron and permutohedron as compactifications of configuration spaces},
  author={Lambrechts, Pascal and Turchin, Victor and Voli{\'c}, Ismar},
  journal={Bulletin of the Belgian Mathematical Society-Simon Stevin},
  volume={17},
  number={2},
  pages={303--332},
  year={2010},
  publisher={The Belgian Mathematical Society}
}

@article{devadoss2009realization,
  title={A realization of graph associahedra},
  author={Devadoss, Satyan L},
  journal={Discrete Mathematics},
  volume={309},
  number={1},
  pages={271--276},
  year={2009},
  publisher={Elsevier}
}

@article{postnikov2008faces,
  title={Faces of Generalized Permutohedra},
  author={Postnikov, Alex and Reiner, Victor and Williams, Lauren},
  journal={Documenta Mathematica},
  volume={13},
  pages={207--273},
  year={2008}
}

@article{bott1994self,
  title={On the self-linking of knots},
  author={Bott, Raoul and Taubes, Clifford},
  journal={Journal of Mathematical Physics},
  volume={35},
  number={10},
  pages={5247--5287},
  year={1994},
  publisher={American Institute of Physics}
}

@article{StasheffCyclohedron,
author = {Stasheff, Jim},
year = {1996},
month = {09},
pages = {},
title = {From Operads to `Physically' Inspired Theories}
}

@article{tamari1954monoides,
  title={Mono{\"\i}des pr{\'e}ordonn{\'e}s et cha{\^\i}nes de Malcev},
  author={Tamari, Dov},
  journal={Bulletin de la Soci{\'e}t{\'e} math{\'e}matique de France},
  volume={82},
  pages={53--96},
  year={1954}
}

@article{haiman1984constructing,
  title={Constructing the associahedron},
  author={Haiman, Mark},
  journal={Unpublished manuscript, MIT},
  year={1984}
}

@article{carr2006coxeter,
  title={Coxeter complexes and graph-associahedra},
  author={Carr, Michael and Devadoss, Satyan L},
  journal={Topology and its Applications},
  volume={153},
  number={12},
  pages={2155--2168},
  year={2006},
  publisher={Elsevier}
}

@book{petersen2015Eulerian,
author = {Petersen, Kyle},
year = {2015},
month = {10},
pages = {},
title = {Eulerian Numbers},
isbn = {978-1-4939-3090-6},
doi = {10.1007/978-1-4939-3091-3}
}

@book{ziegler2012lectures,
  title={Lectures on polytopes},
  author={Ziegler, G{\"u}nter M},
  volume={152},
  year={2012},
  publisher={Springer Science \& Business Media}
}

@article{fomin2005root,
  title={Root systems and generalized associahedra},
  author={Fomin, Sergey and Reading, Nathan},
  journal={arXiv preprint math/0505518},
  year={2005}
}

\end{document}